\newtheorem{theorem}{Theorem}
\newtheorem{proposition}[theorem]{Proposition}
\newtheorem{lemma}[theorem]{Lemma}
\newtheorem{example}[theorem]{Example}
\theoremstyle{remark}
\newtheorem{remark}[theorem]{Remark}
\renewcommand\leq{\leqslant}
\newcommand\Z{\ensuremath{\mathbb Z}}
\newcommand\Q{\ensuremath{\mathbb Q}}
\newcommand\C{\ensuremath{\mathbb C}}
\newcommand\Qb{{\overline\Q}}
\newcommand\f{{\mathfrak f}}
\newcommand{\ra}{{\rightarrow}}
\newcommand{\lra}{\longrightarrow}
\newcommand{\M}{{N_{L/\Q}(\cN_L(E))}}
\newcommand\End{\operatorname{End}}
\newcommand\Gal{\operatorname{Gal}}
\newcommand\GL{\operatorname{GL}}
\newcommand\Hom{\operatorname{Hom}}
\newcommand\ord{\operatorname{ord}}
\newcommand\Res{\operatorname{Res}}
\newcommand\acc[2]{\ensuremath{{}^{#1}\hskip-0.1ex{#2}}}
\newcommand{\comp}{\begin{picture}(6,5)(-3,-2)\put(0,1){\circle{2}} \end{picture}}\def\circ{\comp}
\def\cN{\mathcal N}
\title[Modularity level of modular abelian varieties over number fields]{On the modularity level of modular abelian varieties over number fields}
\author{Enrique Gonz\'alez--Jim\'enez}
\address{Universidad Aut{\'o}noma de Madrid, Departamento de Matem{\'a}ticas and Instituto de Ciencias Matem{\'a}ticas (CSIC-UAM-UC3M-UCM), Madrid, Spain}
\thanks{The first author was supported in part by grants MTM 2009-07291 (Ministerio de Educaci{\'o}n y Ciencia, Spain) and  CCG08-UAM/ESP-3906 (Universidad Auton{\'o}ma de Madrid-Comunidad de Madrid, Spain). The second author was supported in part by grants 2009 SGR 1220 and MTM2009-13060-C02-01.}
\email{enrique.gonzalez.jimenez@uam.es}
\author{Xavier Guitart}
\address{Departament de Matem\`atica aplicada II, Universitat Polit\`ecnica de Catalunya, Jordi Girona 1-3 (Edifici Omega) 08034, Barcelona}
\email{xevi.guitart@gmail.com}
\date{\today}
\begin{document}

\begin{abstract}
Let $f$ be a weight two newform for $\Gamma_1(N)$ without complex
multiplication. In this article we study the conductor of the
absolutely simple factors $B$ of the  variety $A_f$ over certain
number fields $L$. The strategy we follow is to compute the
restriction of scalars $\Res_{L/\Q}(B)$, and then to apply Milne's
formula for the conductor of the restriction of scalars. In this
way we obtain an expression for  the local exponents of the
conductor $\cN_L(B)$. Under some hypothesis it is possible to give
global formulas relating this conductor with $N$. For instance, if
$N$ is squarefree we find that $\cN_L(B)$ belongs to $\Z$ and
$\cN_L(B)\,\f_L^{\, \dim B}=N^{\dim B}$, where $\f_L$ is the
conductor of $L$.
\end{abstract}

\maketitle

\section{Introduction}
Let $C$ be an elliptic curve defined over $\Q$. The
Shimura-Taniyama-Weil conjecture, also known as the modularity
theorem after its proof by Wiles et al. \cite{wiles,BCDT} asserts
that there exists a surjective morphism $J_0(N)\ra C$ defined over
$\Q$, where $J_0(N)$ is the Jacobian of the modular curve
$X_0(N)$. Moreover, the minimum $N$ with this property is equal to
$\cN_\Q(C)$, the conductor of $C$.

A generalization of the modularity theorem, which as Ribet showed
in \cite{ribet-avqmf} is a consequence of the recently proved
Serre's conjecture on residual Galois representations,
characterizes the modular abelian varieties over $\Q$; that is,
the $\Q$-simple abelian varieties $A$ defined over $\Q$ with a
surjective morphism $J_1(N)\ra A$. They are the so-called (simple)
varieties of $\GL_2$-type: those whose endomorphism algebra
$\Q\otimes\End_\Q(A)$ is a number field of degree over
$\Q$ equal to $\dim A$. 

From the modular form side, one can start with a weight two
newform $f$ for $\Gamma_1(N)$. A construction of Shimura attaches
to such an $f$ an abelian variety $A_f$ over $\Q$,  which is  a
quotient of $J_1(N)$; in fact, all quotients of $J_1(N)$ over $\Q$
are of this form. For these varieties Carayol \cite{carayol}
proved that $\cN_\Q(A_f)=N^{\dim A_f}$. The generalization of
Shimura-Taniyama-Weil asserts that each abelian variety of
$\GL_2$-type is isogenous over $\Q$ to an $A_f$ for some $f$,
therefore the formula $\cN_\Q(A)=N^{\dim A}$ is valid for all $A$
of $\GL_2$-type.


The modular abelian varieties $A_f$ are simple over $\Q$, but they
are not absolutely simple in general: they are isogenous over
$\Qb$ to a power of an absolutely simple variety, which is called
a building block for $A_f$. To be more precise, if $L$ is the
smallest number field where all the endomorphisms of $A_f$ are
defined, then $A_f$ is isogenous over $L$ to a variety of the form
$B^n$, for some absolutely simple  variety $B$ defined over $L$.
In this article we discuss possible generalizations of Carayol's
formula for these modular abelian varieties over number fields
$B/L$, in the case where they do not have CM.

More concretely, in section \ref{sec: 2} we recall the notation
and  basic facts regarding modular abelian varieties and building
blocks. Afterwards,  we give the explicit decomposition of the
restriction of scalars $\Res_{L/\Q}(B)$ as as product of modular
abelian varieties up to isogeny over $\Q$. We use this in section
\ref{sec: 3} in order to give an expression for the local
exponents of $\cN_L(B)$, in terms of the levels of certain twists
of $f$ by Dirichlet characters related to the field $L$. In some
cases, the conductor $\cN_L(B)$ turns out to be a rational integer
and we obtain   similar formulas to the ones for the varieties
$A_f$; we remark that in this situation  the conductor of $L$,
that we denote by $\f_L$, also appears in the expressions. We have
collected all these formulas, that appear in the text as
propositions \ref{prop: formula conductor nivel ord leq 2},
\ref{prop: formula conductor nivel Gamma0} and \ref{prop:
squarefree}, in the following

\medskip

\noindent {\bf Main Theorem.} {\it  Let $f\in S_2(N,\varepsilon)$
be a weight two newform for $\Gamma_1(N)$ with Nebentypus
$\varepsilon$ and without complex multiplication. Let $A_f$ be the
modular abelian variety attached to $f$,  let $L$ be the smallest
field of definition of the endomorphisms of $A_f$, and let $B/L$
be a simple quotient of $A_f$ over $L$.
\begin{enumerate}
\item Suppose that one of the following conditions is satisfied:
\begin{itemize}
\item $N$ is odd and $\ord(\varepsilon)\leq 2$,
\item $N$ is squarefree.
\end{itemize}
Then $\cN_L(B)$ belongs to $\Z$ and $$\cN_L(B)\,\f_L^{\,\dim
B}=N^{\,\dim B}.$$
\item If $f$ is a
newform for $\Gamma_0(N)$, that is if $\varepsilon=1$, then
$\cN_L(B)$ belongs to $\Z$. Moreover,
\begin{enumerate}
\item  if $v_2(\f_L)=3$ and $v_2(\f_K)=2$ for some $K\subseteq L$ then $$2\ \cN_L(B)\ \f_L^{\,\dim B}=N^{\dim B},$$
\item in the remaining cases for $L$ then $$\cN_L(B)\ \f_L^{\,\dim B}=N^{\dim B}.$$

\end{enumerate}

\end{enumerate}
}
\medskip

Finally, in section \ref{sec: 4}  we provide some examples of
building blocks of dimension one and two with their corresponding
equations. Concretely, for the case of dimension one we compute
their conductors, in order to show the different behaviors when
the hypothesis of the above theorem are not satisfied.
 We observe that, although the conductor can be a rational
 integer sometimes,  formulas as the ones in the
 theorem do not always hold; we also give examples where the conductor is not a rational integer.
For the case of dimension two, the level-conductors local formula
provided at Proposition \ref{cond_local} allows us to compute the
conductor of the Jacobian of a genus two curve defined over a
number field that corresponds to a building block.

\section{Modular abelian varieties}\label{sec: 2}
We begin this section by recalling  the basic facts about modular
abelian varieties and  their absolutely simple factors that we
will use. In particular, we introduce the type of varieties that
we will be dealing with in the rest of the article: the building
blocks. Our goal  is to prove a formula for the restriction of
scalars of building blocks, which will be the base for our
analysis of their conductors in the subsequent sections.

Let $f=\sum a_nq^n\in S_2(N,\varepsilon)$ be a normalized newform
without complex multiplication of weight $2$, level $N$ and
Nebentypus $\varepsilon$, and let $E=\Q\left(\{a_n\}\right)$ and
$F=\Q\left(\{a_p^2\varepsilon(p)^{-1}\}_{p\nmid N}\right)$. These
number fields will be denoted by $E_f$ and $F_f$ if we need to
make  the newform from which  they come from explicit. The
extension $E/F$ is abelian, and for each $s\in \Gal(E/F)$ there
exists a single Dirichlet character $\chi_s$ such that $\acc s f =
f\otimes\chi_s$, where $f\otimes\chi_s$ is a newform whose $p$-th
Fourier coefficient coincides with $a_p\chi_s(p)$ for almost all
$p$ (see \cite[\S 3 ]{ribet-twists}). Since $\acc s f$ has level
$N$, the conductor of $\chi_s$ is divisible only by primes dividing
$N$. We will also consider another number field attached to $f$,
namely $L=\Qb^{\cap\ker \chi_s}$ where $s$ runs through
$\Gal(E/F)$.

Shimura  \cite[Theorem~1]{shimura73} attached to $f$ an abelian
variety $A_f/\Q$  constructed as a quotient of $J_1(N)$, the
Jacobian of the modular curve $X_1(N)$, and with an action of $E$
as endomorphisms defined over $\Q$. In fact,
$\Q\otimes_\Z\End_\Q(A_f)\simeq E$ and since $\dim A_f=[E:\Q]$,
the modular abelian varieties  $A_f$ are of $\GL_2$-type; as a
consequence of Serre's conjecture all varieties of $\GL_2$-type
are isogenous to some $A_f$.

The variety $A_f$ is simple over $\Q$, but it is not necessarily
absolutely simple. In general, $A_f$ is isogenous over $\Qb$ to a
power of an absolutely simple abelian variety $B$, which is called
a \emph{building block} of $A_f$. This $B$ has some remarkable
properties; for instance, it is isogenous to all of its Galois
conjugates. In addition, its endomorphism algebra
$\Q\otimes\End(B)$ is a central division algebra over a number
field isomorphic to $F$, it has Schur index $t=1$ or $t=2$ and its
reduced degree $t[F:\Q]$ is equal to $\dim B$. The building blocks
of dimension one are the $\Q$-curves, i.e. the  elliptic curves
$B/\Qb$ that are isogenous to all of their Galois conjugates.

There are infinitely many  varieties $A_f$ with the same
absolutely simple factor up to isogeny. However, by a result of
Ribet \cite[Theorem 4.7]{ribet-twists} if this happens for two
varieties $A_f$ and $A_g$ we can suppose that $g$ is the twist of
$f$ by some Dirichlet character. We will need a more precise
version of this result, which already appears implicitly in
Ribet's proof.

\begin{proposition}\label{corolario ribet}
Let $f,g$ be two normalized newforms without complex
multiplication such that $A_f\sim_K B^n$ and $A_g\sim_K B^m$ for
some absolutely simple abelian variety $B$ over a number field
$K$. Then there exists a character $\chi:\Gal(K/\Q)\ra \C^\times$
such that $g=f\otimes \chi$.
\end{proposition}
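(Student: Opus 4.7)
My plan is to build on Ribet's Theorem~4.7 \cite{ribet-twists}, which applied directly to $f$ and $g$ yields some Dirichlet character $\chi$ of $G_\Q$ with $g = f\otimes\chi$. The task reduces to showing that $\chi|_{G_K} = 1$, so that $\chi$ descends to a character of $\Gal(K/\Q)$; after enlarging $K$ to its Galois closure over $\Q$, we may assume $K/\Q$ is Galois without losing the hypotheses.

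Since twisting by a Dirichlet character preserves the isogeny class over $\overline\Q$, we have $A_g \sim_{\overline\Q} A_f \sim_{\overline\Q} B^n$, and comparing with $A_g \sim_K B^m$ gives $n = m$. Restricting the relation $A_g \sim_\Q A_f \otimes \chi$ to $K$, and using $A_f|_K \sim_K B^n$, yields
\[
B^n \sim_K A_g|_K \sim_K A_f|_K \otimes \chi|_{G_K} \sim_K (B \otimes \chi|_{G_K})^n.
\]
As both $B$ and $B \otimes \chi|_{G_K}$ are absolutely simple, this forces $B \sim_K B \otimes \chi|_{G_K}$, which at the level of $\ell$-adic Galois representations reads $\rho_{B,\ell} \cong \rho_{B,\ell} \otimes \chi|_{G_K}$.

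Taking traces of Frobenius, for all primes $\p$ of $K$ outside a finite set one obtains
\[
(1 - \chi|_{G_K}(\Frob_\p))\, \tr \rho_{B,\ell}(\Frob_\p) = 0.
\]
If $\chi|_{G_K}$ were non-trivial, by Chebotarev it would differ from $1$ on a positive-density set of primes, forcing $\tr \rho_{B,\ell}(\Frob_\p) = 0$ on that set. But $B$ inherits the non-CM property from $f$, and for a non-CM abelian variety the set of primes with vanishing Frobenius trace has density zero. This contradiction forces $\chi|_{G_K} = 1$.

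The main obstacle I anticipate is the careful handling of the coefficient ring for $\rho_{B,\ell}$: since $B$ carries an action of the endomorphism algebra $\End(B)\otimes\Q$, the representation $\rho_{B,\ell}$ naturally lives over $\End(B)\otimes\Q_\ell$, and the notions of absolute simplicity of $B \otimes \chi|_{G_K}$ and non-vanishing of Frobenius traces must be interpreted in that enriched setting. A secondary subtlety is the uniqueness of $\chi$: since $f$ is non-CM, no non-trivial character $\psi$ can satisfy $f\otimes\psi = f$ (such a $\psi$ would be trivial on the density-one set of Frobenii where $a_p(f) \neq 0$, hence identically trivial), so $\chi$ is unambiguously determined by the pair $(f,g)$.
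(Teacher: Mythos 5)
Your overall strategy---invoke Ribet's Theorem~4.7 to obtain a Dirichlet character $\chi$ with $g=f\otimes\chi$, and then show $\chi|_{G_K}=1$---is the same as the paper's. The paper dispatches the second step by appealing directly to Ribet's proof, whereas you give a self-contained argument via traces of Frobenius, Chebotarev, and the non-CM hypothesis; that core idea is correct and is essentially a reconstruction of what Ribet does.

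The gap is in how you arrive at $B\sim_K B\otimes\chi|_{G_K}$. You assert $A_g\sim_\Q A_f\otimes\chi$ and $n=m$, justifying the latter by ``twisting preserves the $\Qb$-isogeny class.'' Neither claim holds in general. The twist $A_f\otimes\chi$ has dimension $\dim A_f=[E_f:\Q]$, while $\dim A_g=[E_g:\Q]$ can be strictly smaller: twisting by an inner-twist character can shrink the field of Fourier coefficients, and indeed the whole point of Lemma~\ref{lema: desigualdad dimension Afxchi} and Proposition~\ref{Res} is that $\dim A_{f\otimes\chi}$ may drop below $\dim A_f$. In that case $A_g$ and $A_f\otimes\chi$ have different dimensions and cannot be isogenous, so the displayed chain of isogenies over $K$ breaks down, as does the conclusion $n=m$. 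The correct relation is $A_f\otimes\chi\sim_\Q A_g^{\,r}$ with $r=[E_f:E_g]$ (after one checks that $\chi$ may be taken to have values in $E_f$, which is not automatic since Ribet only gives $\chi(p)=a_p(g)/a_p(f)\in E_fE_g$), whence $n=rm$; restricting to $K$ then still yields $(B\otimes\chi|_{G_K})^n\sim_K B^n$ and the rest of your argument goes through. Two smaller points: Ribet's Theorem~4.7 is applied to the nonvanishing of $\Hom_{G_K}(V_\ell(A_f),V_\ell(A_g))$, which your hypotheses supply and which you should say explicitly; and the reduction to the Galois closure of $K$ should be dropped, since enlarging $K$ to $\tilde K$ only establishes the weaker fact $\chi|_{G_{\tilde K}}=1$, and the proposition already presupposes $K/\Q$ Galois by referring to $\Gal(K/\Q)$.
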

\begin{proof}
Let $V_f=T_\ell(A_f)\otimes\Q$ and $V_g=T_\ell(A_{g})\otimes\Q$,
where $T_\ell(A_f)$ and $T_\ell(A_g)$ are the Tate modules
attached to $A_f$ and $A_g$ respectively, and let $H=\Gal(\Qb/K)$.
Under the hypothesis of the proposition we have
$\Hom_H(V_f,V_g)\neq 0$. By \cite[Theorem 4.7]{ribet-twists},
there exists a character $\chi:G_\Q\ra\C^\times$ such that
$f=g\otimes \chi$. Ribet already asserts in the proof of his
theorem that $\chi$ is necessarily trivial on $H$. Therefore,
$\chi$ comes from a character $\chi:\Gal(K/\Q)\ra \C^\times$.
\end{proof}

Gonz\'alez and Lario proved in \cite{pep-lario} that $L$ is the
smallest number field where all the endomorphisms of $A_f$ are
defined. This implies that $A_f\sim_L B^n$, with $B/L$ a building
block with the endomorphisms defined over $L$ and which is
$L$-isogenous to all of its Galois conjugates. From now on $B$
will denote such a building block obtained by decomposing $A_f$
over the field $L$ defined above, and $t$ will denote the Schur
index of $\End(B)$. Using the results in \cite{guitart-quer} one
can show that the restriction of scalars $\Res_{L/\Q}(B)$ is
isogenous over $\Q$ to a product of modular abelian varieties.
Indeed, one has the following

\begin{proposition}\label{prop: restriccion de escalares de B primera proposicion}
The restriction of scalars $\Res_{L/\Q}(B)$ decomposes into simple
abelian varieties up to isogeny as \begin{equation}\label{eq:
restricion de escalares B como producto de Afs}
\Res_{L/\Q}(B)\sim_\Q \prod \left( A_{f_1}\right)^t\times
\cdots\times \left( A_{f_r}\right)^t,
\end{equation}
where $A_{f_1},\dots,A_{f_r}$ are non-isogenous modular abelian
varieties.
\end{proposition}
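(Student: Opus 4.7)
The approach is to compute the decomposition of $\Res_{L/\Q}(B)$ by first base-changing to $L$ and then descending back to $\Q$. Base change commutes with restriction of scalars, giving
\[
\Res_{L/\Q}(B) \times_\Q L \;\sim_L\; \prod_{\sigma \in \Gal(L/\Q)} \acc{\sigma}{B};
\]
because $B$ is $L$-isogenous to each of its Galois conjugates (this being one of the characterizing properties of a building block recalled just before the statement), the right-hand side is $L$-isogenous to $B^{[L:\Q]}$. In particular, every $\Q$-simple factor of $\Res_{L/\Q}(B)$ becomes, after base change to $L$, isogenous to a power of $B$.

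The second step is to identify each such $\Q$-simple factor with a modular abelian variety $A_h$. The results of \cite{guitart-quer}, combined with Serre's conjecture, ensure that any $\Q$-simple abelian variety whose base change to $L$ becomes isogenous to a power of a building block is itself of $\GL_2$-type, hence $\Q$-isogenous to some $A_h$ attached to a weight-two newform $h$ without CM. Proposition \ref{corolario ribet} applied to $f$ and to $h$ forces $h$ to be of the form $f\otimes\chi$ for a character $\chi$ of $\Gal(L/\Q)$. Therefore $\Res_{L/\Q}(B) \sim_\Q \prod_{i=1}^{r} A_{f_i}^{m_i}$, where the $f_i$ run over a set of representatives of the $\Q$-isogeny classes of twists of $f$ by characters of $\Gal(L/\Q)$.

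To conclude, the plan is to show that each $m_i$ equals $t$ by a direct dimension count. Using the restriction-of-scalars adjunction
\[
\Hom_\Q(A_{f_i}, \Res_{L/\Q}(B)) \otimes \Q \;\simeq\; \Hom_L(A_{f_i}\times_\Q L,\ B) \otimes \Q,
\]
together with an $L$-isogeny $A_{f_i}\times_\Q L \sim_L B^{n_i}$ and the fact that $\End^0_L(B)$ is a central division algebra over (a field isomorphic to) $F_{f_i}$ of Schur index $t$ and $F$-dimension $t^2$, one gets that the $\Q$-dimension of this Hom-space is $n_i\, t^2\,[F:\Q]$. Dividing by $[E_{f_i}:\Q]=n_i\dim B=n_i\, t\,[F:\Q]$ yields $m_i=t$. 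I expect this multiplicity computation to be the main technical obstacle, as it requires careful bookkeeping of the Schur index together with the degrees $[E_{f_i}:F_{f_i}]$, $[F:\Q]$, and $[L:\Q]$. A useful consistency check is that the resulting total dimension $t\sum_i\dim A_{f_i}$ must equal $[L:\Q]\dim B=\dim\Res_{L/\Q}(B)$, which in turn pins down the number $r$ of distinct isogeny classes as $[L:\Q]/[E:F]$.
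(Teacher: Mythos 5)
Your overall shape is right (base change to $L$, identify the $\Q$-simple factors, compute multiplicities), and your multiplicity argument via the adjunction $\Hom_\Q(A_{f_i},\Res_{L/\Q}(B))\otimes\Q\simeq\Hom_L((A_{f_i})_L,B)\otimes\Q$ together with a dimension count is a clean alternative to the paper's citation of \cite[Lemma 5.1]{guitart-quer}. But there is a genuine gap in the second step.

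You assert that ``the results of \cite{guitart-quer}, combined with Serre's conjecture, ensure that any $\Q$-simple abelian variety whose base change to $L$ becomes isogenous to a power of a building block is itself of $\GL_2$-type.'' That is not a general fact, and it is not what \cite[Theorem 5.3]{guitart-quer} says. A building block $B/L$, together with a compatible system of isogenies $\mu_\sigma\colon\acc\sigma B\ra B$, determines a $2$-cocycle $c_{B/L}(\sigma,\tau)=\mu_\sigma\circ\acc\sigma\mu_\tau\circ\mu_{\sigma\tau}^{-1}$ with values in $F^\times$, and the theorem you need applies only when this cocycle is \emph{symmetric}. When it is not, $\Res_{L/\Q}(B)$ can have $\Q$-simple factors that are not of $\GL_2$-type, so the principle you invoke fails without a symmetry hypothesis. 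The paper does exactly this missing verification: it takes the splitting map $\alpha\colon\Gal(L/\Q)\ra E^\times$ attached to the $\GL_2$-type structure of $A_f$, produces $\mu_\sigma$ via Pyle's construction compatibly with $\End(A_f)$, and checks that $c_{B/L}(\sigma,\tau)=\alpha(\sigma)\alpha(\tau)\alpha(\sigma\tau)^{-1}$ as elements of the center of $\Q\otimes\End(B)$, which is visibly symmetric since $\Gal(L/\Q)$ is abelian. Without this, you have not justified that the $\Q$-simple pieces of $\Res_{L/\Q}(B)$ are of $\GL_2$-type, and then neither Proposition \ref{corolario ribet} nor your multiplicity count can get off the ground. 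The fix is to insert the cocycle-symmetry argument before invoking \cite[Theorem 5.3]{guitart-quer}.
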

\begin{proof}
There exists a map $\alpha: \Gal(L/\Q)\ra E^\times$ such that
$\acc
\sigma\varphi=\alpha(\sigma)\circ\varphi\circ\alpha(\sigma)^{-1}$.
The $\alpha(\sigma)$ can be identified with an element of
$\Q\otimes\End_\Q(A_f)$, and together with an isogeny $A_f\sim_L
B^n$ it can be used to construct an isogeny $\mu_\sigma\colon \acc
\sigma B\ra B$ compatible with $\End(A_f)$ (see \cite[Proposition
1.5]{pyle}). It turns out that $\mu_\sigma\circ\acc \sigma
\mu_\tau\circ
\mu_{\sigma\tau}^{-1}=\alpha(\sigma)\circ\alpha(\tau)\circ\alpha(\sigma\tau)^{-1}$
as elements of the center of $\Q\otimes\End(B)$. Therefore, the
cocycle $c_{B/L}(\sigma,\tau)=\mu_\sigma\circ\acc \sigma
\mu_\tau\circ \mu_{\sigma\tau}^{-1}$ is symmetric, since
$\Gal(L/\Q)$ is abelian. Now \cite[Theorem 5.3]{guitart-quer}
implies that $\Res_{L/\Q}(B)$ is isogenous over $\Q$ to  a product
$A_1^{n_1}\times \cdots \times A_r^{n_r}$, where the $A_i$ are
non-isogenous abelian varieties of $\GL_2$-type. But each of these
varieties of $\GL_2$-type $A_i$ is isogenous to some modular
abelian variety $A_{f_i}$, and by \cite[Lemma 5.1]{guitart-quer}
each $n_i$ is equal to $t$.
\end{proof}
The rest of the section is devoted to give an explicit expression
for the modular forms that appear in \eqref{eq: restricion de
escalares B como producto de Afs}, in terms of a certain action of
$\Gal(E/F)$ on a group of characters that we now define. For $s\in
\Gal(E/F)$ let $\chi_s$ be the Galois character such that $\acc s
f =f\otimes\chi_s$, and let $G\subseteq \Hom(G_\Q,\C^\times)$ be
the group generated by all such $\chi_s$. Since $L$ is the fixed
field of $\Qb$ by $\cap_{s} \ker \chi_s$, it is also the fixed
field of $\Qb$ by $\cap_{\chi\in G}\ker \chi$. However, we remark
that a character $\chi\in G$ is not necessarily of the form
$\chi_s$, but a product of elements of the form $\chi_s$ in
general. An element $\chi\in G$ is trivial when restricted to
$\Gal(\Qb/L)$, so it can be identified with a character
$\Gal(L/\Q)\ra \C^\times$.  In fact, $G$ can be identified with
$\Hom(\Gal(L/\Q),\C^\times)$ and therefore we have that $\mid G
\mid =[L:\Q]$ (cf. \cite[pp. 21-22]{Wa}). We define an action  of
$\Gal(E/F)$ on $G$ by
\begin{equation*}
\begin{array}{rcc}
\Gal(E/F)  \times  G & \longrightarrow & G\\
(s ,  \chi)&\longmapsto & s\cdot \chi=\chi_s{^s\chi}.
\end{array}
\end{equation*}
The cocycle identity of the characters $\chi_s$ (\cite[Proposition
3.3 ]{ribet-twists}) implies that it is indeed a group action,
since for $s,t\in \Gal(E/F)$ we have that $s\cdot (t\cdot
\chi)=s\cdot(\chi_t \acc t \chi)=\chi_s\acc s \chi_t
\acc{st}\chi=\chi_{st}\acc{st}\chi=(st)\cdot \chi$. Let $\hat G$
be a system of representatives for the orbits of $G$, and for
$\chi\in G$ let $I_\chi$  be the isotropy subgroup of $G$ at
$\chi$.
\begin{lemma}\label{lema: desigualdad dimension Afxchi}
$\dim (A_{f\otimes\chi})\leq [\Gal(E/F):I_\chi][F:\Q]$
\end{lemma}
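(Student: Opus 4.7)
The dimension of $A_{f\otimes\chi}$ equals the degree $[E_{f\otimes\chi}:\Q]$ of its coefficient field. Writing $g=f\otimes\chi$, the plan is to bound $[E_gF:\Q]$ by $[\Gal(E/F):I_\chi]\cdot[F:\Q]$, which then bounds $[E_g:\Q]=\dim A_g$.

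The core idea is to identify the $\Gal(\Qb/F)$-orbit of the newform $g$ with the orbit of $\chi$ in $G$ under the action of $\Gal(E/F)$ defined just before the lemma. The preliminary but essential remark is that every character in $G$ takes values in $E$: the generators $\chi_s$ satisfy $\chi_s(p)=a_p^s/a_p\in E$ for all primes $p$ with $a_p\neq 0$, so their values (and those of arbitrary products) all lie in $E$. This is what makes $\acc s\chi$ intrinsic and the assignment $\chi\mapsto s\cdot\chi=\chi_s\,\acc s\chi$ well-defined without choosing lifts. Given this, for any $\sigma\in\Gal(\Qb/F)$ with $\sigma|_E=s$, a direct computation yields
$$\acc\sigma(f\otimes\chi) = \acc s f \otimes \acc s\chi = f\otimes(\chi_s\cdot \acc s\chi) = f\otimes(s\cdot\chi),$$
so the $\Gal(\Qb/F)$-action on $g$ factors through the $\Gal(E/F)$-action on $G$.

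Because $f$ has no complex multiplication, distinct characters in $G$ produce distinct twists of $f$; hence the $\Gal(\Qb/F)$-orbit of $g$ has cardinality exactly $[\Gal(E/F):I_\chi]$. Standard Galois theory identifies this orbit size with $[E_gF:F]$, whence
$$[E_g:\Q] \leq [E_gF:\Q] = [E_gF:F]\cdot[F:\Q] = [\Gal(E/F):I_\chi]\cdot[F:\Q],$$
as required. The one technical obstacle is the verification that $G$ is $E$-valued, which is what permits the orbit computation to be carried out intrinsically inside $G$ rather than in a larger cyclotomic extension; once this is in place the rest is essentially an orbit–stabilizer computation together with the no-CM hypothesis.
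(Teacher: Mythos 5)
Your proof is correct and rests on the same two ingredients as the paper's argument: that $G$ takes values in $E$ (hence $E_{f\otimes\chi}\subseteq E$), and the computation showing $s\in I_\chi$ implies $\acc s(f\otimes\chi)=f\otimes\chi$; your orbit--stabilizer phrasing, passing through $[E_{f\otimes\chi}F:F]$, is a cosmetic repackaging of the paper's direct bound $[E_{f\otimes\chi}:\Q]\leq [E:\Q]/|I_\chi|$. The only wrinkle is that you assert the $\Gal(\Qb/F)$-orbit of $f\otimes\chi$ has cardinality \emph{exactly} $[\Gal(E/F):I_\chi]$, invoking the no-CM hypothesis via ``distinct characters in $G$ give distinct twists'' (a true but nontrivial input, ultimately relying on the density-zero of primes with $a_p=0$); the lemma only requires the orbit to have size \emph{at most} $[\Gal(E/F):I_\chi]$, which follows from $I_\chi$ being contained in the stabilizer and is precisely the inclusion the paper uses, reserving the equality for Proposition \ref{Res} where it is derived by a dimension count instead.
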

\begin{proof}
For $s\in \Gal(E/F)$ the character $\chi_s$ takes values in $E$,
since $\chi_s(p)=\acc s a_p/a_p$ for almost all $p$. Therefore,
any $\chi\in G$ also takes values in $E$. This implies that
$E_{f\otimes\chi}$, the field of Fourier coefficients of
$f\otimes\chi$, is contained in $E$. For any $s\in I_\chi$ we have
that $\acc s(f\otimes\chi)=\acc s f\otimes \acc s \chi=f\otimes
\chi_s\acc s \chi_s=f\otimes (s\cdot \chi )=f\otimes\chi$. By
Galois theory we find  that $I_\chi\subseteq
\Gal(E/E_{f\otimes\chi})$ and so $\mid I_\chi\mid \leq
[E:\Q]/[E_{f\otimes\chi}:\Q]$, which gives that
\begin{eqnarray*}
\dim (A_{f\otimes\chi})=[E_{f\otimes\chi}:\Q]\leq
\frac{[E:\Q]}{\mid I_\chi\mid}=\frac{[E:F]}{\mid
I_\chi\mid}[F:\Q]=[\Gal(E/F):I_\chi][F:\Q].
\end{eqnarray*}
\end{proof}
Now we can give explicitly the modular forms appearing in
proposition \ref{prop: restriccion de escalares de B primera
proposicion}, and we can also give  the dimension of the
corresponding modular abelian varieties.
\begin{proposition}\label{Res}
The inequality in lemma \ref{lema: desigualdad dimension Afxchi}
is in fact an equality and
\begin{equation}\label{eq: formula restriccion de escalares la buena}
\Res_{L/\Q}(B)\sim_\Q \prod_{\chi\in \hat G}\left(A_{f\otimes
\chi}\right)^t.
\end{equation}
\end{proposition}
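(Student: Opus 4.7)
The plan is to use Proposition~\ref{prop: restriccion de escalares de B primera proposicion} to express $\Res_{L/\Q}(B)$ as a product of modular abelian varieties, then Proposition~\ref{corolario ribet} to realize each factor as the variety attached to a twist of $f$ by some $\chi\in G$, and finally a dimension count to conclude both that every orbit of the $\Gal(E/F)$-action on $G$ appears and that Lemma~\ref{lema: desigualdad dimension Afxchi} holds with equality.

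First I would observe that, since $B$ is $L$-isogenous to each of its Galois conjugates, $\Res_{L/\Q}(B)_L\sim B^{[L:\Q]}$; in particular every $\Q$-simple factor $A_{f_i}$ of $\Res_{L/\Q}(B)$ is $L$-isogenous to a power of $B$. Proposition~\ref{corolario ribet} applied with $K=L$ to the pair $(f,f_i)$ then provides a character $\chi_i\in\Hom(\Gal(L/\Q),\C^\times)=G$ with $f_i=f\otimes\chi_i$, so the decomposition already takes the form $\prod_{i=1}^{r}(A_{f\otimes\chi_i})^t$ for certain $\chi_i\in G$.

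The $\chi_i$ lie in pairwise distinct $\Gal(E/F)$-orbits, for if $\chi_j=s\cdot\chi_i$ with $s\in\Gal(E/F)$ then
\[
f\otimes\chi_j=f\otimes(s\cdot\chi_i)=f\otimes\chi_s\otimes\acc{s}{\chi_i}=\acc{s}{f}\otimes\acc{s}{\chi_i}=\acc{s}{(f\otimes\chi_i)},
\]
showing that $f\otimes\chi_j$ is a Galois conjugate of $f\otimes\chi_i$, so $A_{f\otimes\chi_i}=A_{f\otimes\chi_j}$, contradicting the non-isogeny of $A_{f_i}$ and $A_{f_j}$.

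To conclude I would compare dimensions. The left-hand side has dimension $[L:\Q]\dim B=|G|\,t\,[F:\Q]$, whereas Lemma~\ref{lema: desigualdad dimension Afxchi} combined with the orbit-stabilizer identity gives
\[
t\sum_{i=1}^{r}\dim A_{f\otimes\chi_i}\;\leq\; t\,[F:\Q]\sum_{i=1}^{r}|\Gal(E/F)\cdot\chi_i|\;\leq\; t\,[F:\Q]\,|G|.
\]
Equating these forces equality at both bounds: the orbit sizes sum to $|G|$, so $\{\chi_1,\dots,\chi_r\}$ exhausts a complete set $\hat G$ of orbit representatives (yielding~\eqref{eq: formula restriccion de escalares la buena}), and $\dim A_{f\otimes\chi_i}=[\Gal(E/F):I_{\chi_i}][F:\Q]$ for each $i$. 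Since $A_{f\otimes\chi}$ depends only on the orbit of $\chi$, this last equality extends to every $\chi\in G$, showing that Lemma~\ref{lema: desigualdad dimension Afxchi} is sharp. The delicate point is really the very first step, namely verifying that every simple factor of $\Res_{L/\Q}(B)$ is $L$-isogenous to a power of $B$ so that Proposition~\ref{corolario ribet} applies with $K=L$; once this is in place the rest of the argument is pure orbit bookkeeping.
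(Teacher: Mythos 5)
Your proof is correct and takes essentially the same route as the paper's: you use the $L$-isogeny $\Res_{L/\Q}(B)\sim_L B^{[L:\Q]}$ together with Proposition~\ref{corolario ribet} to identify the simple factors as twists $f\otimes\chi$ with $\chi\in G$, and then close with the same dimension count based on Lemma~\ref{lema: desigualdad dimension Afxchi} and orbit-stabilizer. The only cosmetic difference is that the paper first builds an exhaustive morphism from $\prod_{\chi\in\hat G}(A_{f\otimes\chi})^t$ onto $\Res_{L/\Q}(B)$ and then shows dimensions match, whereas you verify the distinctness of the orbits of the $\chi_i$ up front and compare dimensions directly; the two bookkeeping styles are equivalent.
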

\begin{proof}
Let $A=\Res_{L/\Q} B$. Since $B$ is $L$-isogenous to all of its
Galois conjugates and $A\simeq_L \prod_{\sigma\in \Gal(L/\Q)}
\acc\sigma B $, we have that $A$ is $L$-isogenous to $B^{[L:\Q]}$.
Therefore, if $A_g$ is a simple factor of $A$ over $\Q$ it
isogenous to a power of $B$ over $L$. Since $A_f$ is also
isogenous to a power of $B$ over $L$, proposition \ref{corolario
ribet}  implies that $g=f\otimes\chi$ for some Galois character
$\chi\in G$. Hence, the modular forms $f_i$ of the decomposition
\eqref{eq: restricion de escalares B como producto de Afs} are of
the form $f\otimes\chi$ for some $\chi$ belonging to $G$. But if
$\chi,\chi'\in G$ are in the same orbit for the action of
$\Gal(E/F)$, the varieties $A_{f\otimes\chi}$ and
$A_{f\otimes\chi'}$ are isogenous over $\Q$. Indeed, in this case
$s\cdot \chi=\chi'$ for some  $s\in\Gal(E/F)$, and then $\acc s
(f\otimes\chi)=f\otimes\chi'$. This, together with the fact that
the modular abelian variety attached to $\acc s(f\otimes \chi)$ is
isogenous over $\Q$ to the one attached to $ f\otimes\chi$ implies
that $A_{f\otimes\chi}$ is isogenous to $ A_{f\otimes\chi'}$ over
$\Q$.

Therefore, proposition \ref{prop: restriccion de escalares de B
primera proposicion} implies that there exists an exhaustive
morphism over $\Q$
$$\lambda\colon\prod_{\chi\in \hat G}\left( A_{f\otimes \chi}\right)^t\lra
A,$$ so we have that
\begin{eqnarray*}
t[F:\Q]|G|&=&|G|\dim B=\dim A\leq \sum_{\chi\in\hat G} \dim
\left(A_{f\otimes\chi}\right)^t\\ &\leq& t[F:\Q]\sum_{\chi\in \hat
G}[\Gal(E/F):I_\chi]=t[F:\Q]|G|.
\end{eqnarray*}
We see that each inequality is in fact an equality, and $\lambda$
is an isogeny since the dimensions of the source and the target
are the same.
\end{proof}

\section{Level-conductors formulas}\label{sec: 3}
As in the previous section  we consider a newform $f\in
S_2(N,\varepsilon)$ and a decomposition $A_f\sim_L B^n$ of $A_f$
into a power of a building block $B$ defined over the field $L$,
and we continue with the same notation as before with respect to
the endomorphism algebra of $B$; namely, $F$ is its center and $t$
its Schur index. In this section we use the decomposition
\eqref{eq: formula restriccion de escalares la buena} to compute
the local exponent of the conductor of $B$. In some particular
cases we prove that the conductor belongs to $\Z$ (i.e. it is a
principal ideal generated by a rational integer), and we are able
to give a global formula for it involving the conductor of $L$ and
the level of $f$. We denote by $\cN_L(B)$ the conductor of $B$
over $L$,  by $\f_L$ the conductor of $L$ and by $N_{L/\Q}$ the
norm in the extension $L/\Q$. If $\chi$ is a character belonging
to $G$, we also denote by $N_\chi$ the level of the newform
$f\otimes\chi$ and by $\f_\chi$ the conductor of $\chi$. For a
prime $q$, $v_q(x)$ denotes the valuation of $x$ at $q$ and
$\chi_q$ the $q$-primary component of $\chi$.

\begin{proposition}\label{cond_local}
For each rational prime $q$ we have that
\begin{equation}\label{eq: key equation para v_q}
v_q(N_{L/\Q}(\cN_L(B)))+2\,(\dim B)\,\sum_{\chi\in G}
v_q(\f_\chi)=(\dim B) \sum_{\chi\in G}v_q(N_\chi).
\end{equation}
\end{proposition}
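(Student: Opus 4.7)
The plan is to take $q$-adic valuations on two different expressions for the conductor of $\Res_{L/\Q}(B)$: Milne's formula on the one hand, and the decomposition provided by Proposition~\ref{Res} together with Carayol's formula on the other. The equation in the statement will drop out by equating the two.

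First, I would recall Milne's conductor formula for the restriction of scalars: for an abelian variety $A$ over $L$,
$$\cN_\Q(\Res_{L/\Q}(A))=\Disc(L/\Q)^{2\dim A}\cdot N_{L/\Q}(\cN_L(A)).$$
Applied to $A=B$, and passing to $v_q$, this reads
$$v_q\bigl(\cN_\Q(\Res_{L/\Q}(B))\bigr)=2(\dim B)\,v_q(\Disc(L/\Q))+v_q\bigl(N_{L/\Q}(\cN_L(B))\bigr).$$
Since $L/\Q$ is abelian with character group canonically identified with $G$ (as noted before Lemma~\ref{lema: desigualdad dimension Afxchi}), the conductor--discriminant formula gives $\Disc(L/\Q)=\prod_{\chi\in G}\f_\chi$, hence $v_q(\Disc(L/\Q))=\sum_{\chi\in G}v_q(\f_\chi)$. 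This produces the term $2(\dim B)\sum_{\chi\in G}v_q(\f_\chi)$ that appears on the left-hand side of the desired identity.

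Next, I would compute the same quantity using Proposition~\ref{Res}, which gives
$$\cN_\Q(\Res_{L/\Q}(B))=\prod_{\chi\in\hat G}\cN_\Q(A_{f\otimes\chi})^{t}.$$
Carayol's formula states that $\cN_\Q(A_{f\otimes\chi})=N_\chi^{\dim A_{f\otimes\chi}}$, and by the equality part of Proposition~\ref{Res} together with Lemma~\ref{lema: desigualdad dimension Afxchi}, $\dim A_{f\otimes\chi}=[\Gal(E/F):I_\chi][F:\Q]$. Therefore
$$v_q\bigl(\cN_\Q(\Res_{L/\Q}(B))\bigr)=t[F:\Q]\sum_{\chi\in\hat G}[\Gal(E/F):I_\chi]\,v_q(N_\chi).$$
Here $[\Gal(E/F):I_\chi]$ is exactly the size of the orbit of $\chi$ under the $\Gal(E/F)$-action on $G$; moreover, for $\chi'=s\cdot\chi$ one has $f\otimes\chi'=\acc{s}(f\otimes\chi)$, so $N_{\chi'}=N_\chi$ and the function $\chi\mapsto v_q(N_\chi)$ is constant on orbits. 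Regrouping the sum over orbits into a sum over all of $G$, and using $t[F:\Q]=\dim B$, gives
$$v_q\bigl(\cN_\Q(\Res_{L/\Q}(B))\bigr)=(\dim B)\sum_{\chi\in G}v_q(N_\chi).$$

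Equating the two expressions for $v_q\bigl(\cN_\Q(\Res_{L/\Q}(B))\bigr)$ yields \eqref{eq: key equation para v_q} immediately. The only genuinely non-routine step is the orbit-counting reorganization of the sum in the third paragraph, but once one observes that $v_q(N_\chi)$ only depends on the $\Gal(E/F)$-orbit of $\chi$, this is transparent; the rest is bookkeeping around the black boxes provided by Milne, Carayol, and the conductor--discriminant formula.
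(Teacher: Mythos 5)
Your proposal is correct and follows essentially the same route as the paper's own proof: apply Milne's formula to the decomposition of $\Res_{L/\Q}(B)$ from Proposition~\ref{Res}, invoke Carayol's theorem and the conductor--discriminant formula, use $t[F:\Q]=\dim B$ together with the equality $\dim A_{f\otimes\chi}=[\Gal(E/F):I_\chi][F:\Q]$, and convert the product over orbit representatives $\hat G$ into a product over all of $G$ by noting that $N_\chi$ is constant on $\Gal(E/F)$-orbits. The only cosmetic difference is that you pass to $q$-adic valuations earlier, and you spell out explicitly why $N_\chi$ depends only on the orbit (via $f\otimes(s\cdot\chi)=\acc{s}(f\otimes\chi)$), a point the paper leaves implicit.
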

\begin{proof}
By applying the formula of \cite[Proposition 1]{milne} for the
conductor of the restriction of scalars to \eqref{eq: formula
restriccion de escalares la buena} we obtain that
\begin{equation*}
N_{L/\Q}\left(\cN_L(B)\right)(d_{L/\Q})^{2\dim B}=\prod_{\chi\in
\hat G}\cN_\Q\left(A_{f\otimes\chi}\right)^t,
\end{equation*}
where $d_{L/\Q}$ is the discriminant of $L/\Q$. By a theorem of
Carayol (\cite{carayol}) the conductor of a modular abelian
variety $A_g$ is $N_g^{\dim A_g}$, where $N_g$ is the level of the
newform $g$. Using this property and the conductor-discriminant
formula (cf. \cite[p. 28]{Wa}) we find that
\begin{equation*}
N_{L/\Q}\left(\cN_L(B)\right)\prod_{\chi\in G}(\f_\chi)^{2\dim B
}=\prod_{\chi\in \hat G}N_{\chi}^{t[\Gal(E/F):I_\chi][F:\Q]}.
\end{equation*}
But $t[F:\Q]=\dim B$, and the orbit of $\chi$ contains
$[\Gal(E/F):I_\chi]$ elements, each one giving a modular abelian
variety of the same dimension. Thus we have that
\begin{equation*}
N_{L/\Q}\left(\cN_L(B)\right)\prod_{\chi\in G}(\f_\chi)^{2\dim B
}=\prod_{\chi\in  G}N_\chi^{\dim B}
\end{equation*}
from which \eqref{eq: key equation para v_q} follows by taking
valuations at $q$.
\end{proof}
Each prime $\mathfrak q$ dividing $q$ appears in $\cN_L(B)$ with
the same exponent (see the proof of lemma \ref{lemma: integrality
of the conductor}). This observation together with \eqref{eq: key
equation para v_q}  gives a way of computing  the local exponents
of $\cN_L(B)$ in terms of the levels $N_\chi$. In almost all cases
\cite[Theorem 3.1]{Atkin-Li} can be used to compute the levels of
the twisted newforms. Under some hypothesis one can also perform
directly the computation, as in the following:
\begin{lemma}\label{lemma: formula ord eps leq 2 q neq 2}
If $\ord(\varepsilon)\leq 2$ then for all primes $q\neq 2$ we have
that
\begin{equation}\label{eq: formula ord eps leq 2 q neq 2}
v_q(N_{L/\Q}(\cN_L(B)))+ [L:\Q]\,(\dim B)\,v_q(\f_L)=[L:\Q]\,(\dim
B)\,v_q(N).
\end{equation}
\end{lemma}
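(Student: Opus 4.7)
The plan is to use Proposition \ref{cond_local} to convert the goal into a purely local identity at $q$ about levels of twisted newforms, and then verify that identity via Atkin--Li's formula for the level of a twist. Dividing the statement of the lemma and the formula in Proposition \ref{cond_local} by $\dim B$ and subtracting, and using $|G|=[L:\Q]$, the goal reduces to
$$\sum_{\chi\in G} v_q(N_\chi) \;-\; 2\sum_{\chi\in G} v_q(\f_\chi) \;=\; [L:\Q]\bigl(v_q(N)-v_q(\f_L)\bigr).$$
If $q\nmid N$, every $\chi_s$ has conductor dividing a power of $N$, hence is unramified at $q$; the same is then true of every $\chi\in G$, so all four valuations above vanish and the identity is trivial.

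Assume now $q\mid N$. Because $\ord(\varepsilon)\leq 2$, the character $\varepsilon$ takes values in $\{\pm 1\}$ and is fixed by $\Gal(E/F)$; combined with the standard inner-twist identity $\acc s\varepsilon=\varepsilon\,\chi_s^{\,2}$, this forces $\chi_s^{\,2}=1$ for every $s$, so every $\chi\in G$ is quadratic. Since $q$ is odd, the only nontrivial quadratic character ramified at $q$ is the Legendre symbol mod $q$, whence $v_q(\f_\chi)\in\{0,1\}$ for every $\chi\in G$ and $v_q(\f_L)=\max_{\chi}v_q(\f_\chi)\in\{0,1\}$. The characters in $G$ unramified at $q$ form a subgroup $G_0$ of index $1$ when $v_q(\f_L)=0$ and of index $2$ when $v_q(\f_L)=1$.

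I would then invoke \cite[Theorem 3.1]{Atkin-Li} to compute $v_q(N_\chi)$ on $G_0$ and on $G\setminus G_0$, using as an additional constraint that each $\acc s f=f\otimes\chi_s$ has level exactly $N$, so $v_q(N_{\chi_s})=v_q(N)$ for every generator $\chi_s$ of $G$; through Atkin--Li this pins down the local type of the automorphic representation $\pi_q$ attached to $f$ at $q$ in terms of the ramification of the $\chi_s$ at $q$. A short distinction on $v_q(\f_\varepsilon)\in\{0,1\}$ and on whether $v_q(N)=1$ or $v_q(N)\geq 2$ then yields the values of $v_q(N_\chi)$ on each coset of $G_0$, and substituting them into the displayed identity closes the argument.

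The principal obstacle is precisely this Atkin--Li case distinction at the boundary values $v_q(N)\in\{1,2\}$, where $\pi_q$ can be Steinberg, ramified principal series, or (for $v_q(N)=2$) a dihedral supercuspidal; the restriction $q\ne 2$ is essential both because ramified quadratic characters at odd $q$ have conductor exactly $q$ and because the exotic interactions between quadratic characters of conductor $4$ or $8$ and the local representations $\pi_2$ are thereby excluded.
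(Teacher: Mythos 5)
You correctly identify the starting point (Proposition \ref{cond_local}), correctly prove that every $\chi\in G$ is quadratic from $\ord(\varepsilon)\leq 2$, correctly observe that for odd $q$ the $q$-primary part of any $\chi\in G$ is either trivial or the unique quadratic character $\xi_q$ of conductor $q$, and correctly note the key constraint $v_q(N_{\chi_s})=v_q(N)$ coming from $\acc s f = f\otimes\chi_s$. But you then defer the actual computation of $v_q(N_\chi)$ to an Atkin--Li case analysis (on the local type of $\pi_q$ and on $v_q(N)\in\{1,2\}$ versus larger) that you do not carry out and which you flag as ``the principal obstacle.'' This leaves the proof incomplete precisely at the step that matters.

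The gap is also unnecessary: you already have everything you need. Since the level exponent at $q$ of $f\otimes\chi$ depends only on the $q$-primary part $\chi_q$, and since $\chi_q\in\{1,\xi_q\}$ for every $\chi\in G$, there are only two values of $v_q(N_\chi)$ to determine. For $\chi_q=1$ it is $v_q(N)$. For $\chi_q=\xi_q$, pick $s$ with $\chi_{s,q}=\xi_q$ (such $s$ exists exactly when $v_q(\f_L)=1$); then $v_q(N_\chi)=v_q(N_{\chi_s})=v_q(N)$, using the constraint you yourself wrote down. Thus $v_q(N_\chi)=v_q(N)$ for \emph{all} $\chi\in G$, with no appeal to Atkin--Li or to the local type of $\pi_q$. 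Combined with $v_q(\f_\chi)\in\{0,1\}$, each value attained by exactly $|G|/2$ characters when $v_q(\f_L)=1$, substituting into \eqref{eq: key equation para v_q} gives the lemma at once. This is exactly the paper's argument; your plan would eventually recover the same values of $v_q(N_\chi)$ after a longer detour, but as written it stops short of a proof.
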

\begin{proof}
First of all suppose that $v_q(\f_L)=0$. Then for each $\chi\in G
$ we have that $v_q(\f_\chi)=0$ and \eqref{eq: formula ord eps leq
2 q neq 2} follows from  \eqref{eq: key equation para v_q} since
$v_q(N_\chi)=v_q(N)$ for all $\chi\in G$ and $|G|=[L:\Q]$.

Suppose now that $v_q(\f_L)\neq 0$. This means that there exists
an element $s\in\Gal(E/F)$ such that $v_q(\f_{\chi_s})\neq 0$. But
$\chi_{s}$ is a quadratic character since the relation $\acc s
f=f\otimes\chi_s$ implies that $\chi_s^2=\acc s
\varepsilon/\varepsilon=1$. So $\chi_{s,q}$, the $q$-primary part
of $\chi_s$, is the unique character of order 2 and conductor a
power of $q$, that we denote by $\xi_q$. Since $\xi_q$ has
conductor $q$ we see that $v_q(\f_L)=1$. For $i=0,1$ define
$G_q^i=\{\chi\in G \,| \chi_q=\xi_q^i \}$. We have that
$G=G_q^0\sqcup G_q^1$ and that the map $\chi\mapsto \chi\chi_s$ is
a bijection between $G_q^0$ and $G_q^1$. Hence
$|G_q^0|=|G_q^1|=|G|/2$. For $\chi\in G_q^0$ we have that
$v_q(\f_\chi)=0$ and $v_q(N_\chi)=v_q(N)$. For $\chi\in G_q^1$ we
have that $v_q(\f_\chi)=1=v_q(\f_L)$ and
$v_q(N_\chi)=v_q(N_{\chi_s})=v_q(N)$, because the level of
$f\otimes\chi_s$ is the level of $\acc s f$ which is $N$. Plugging
all this into \eqref{eq: key equation para v_q} one obtains
\eqref{eq: formula ord eps leq 2 q neq 2}.
\end{proof}
\begin{lemma}\label{lemma: integrality of the conductor}
$\cN_L(B)$ belongs to $\Z$ if and only if $[L:\Q]$ divides
$v_q(N_{L/\Q}(\cN_L(B)))$ for all rational primes $q$.
\end{lemma}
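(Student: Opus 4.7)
My plan is to reduce both sides of the claimed equivalence to a single divisibility condition on the exponents appearing in the prime factorization of $\cN_L(B)$.

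The starting observation, already invoked in the proof of Proposition \ref{cond_local}, is that for any rational prime $q$ every prime ideal of $\O_L$ above $q$ occurs in $\cN_L(B)$ with one and the same exponent. I would justify this by Galois invariance: the extension $L/\Q$ is Galois and, by construction, $B$ is $L$-isogenous to all of its Galois conjugates $\acc{\sigma}B$; since isogenous abelian varieties share the same conductor and $\cN_L(\acc{\sigma}B)=\sigma(\cN_L(B))$, the ideal $\cN_L(B)$ is fixed by the action of $\Gal(L/\Q)$, which is transitive on the primes above $q$. Write $e_q$ for the resulting common exponent, and let $e$, $f$, $g$ denote the usual ramification index, residue degree and number of primes of $L$ above $q$, so that $efg=[L:\Q]$.

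With this in hand, a short computation with the norm gives
\begin{equation*}
v_q\bigl(N_{L/\Q}(\cN_L(B))\bigr)=fg\,e_q=\frac{[L:\Q]}{e}\,e_q,
\end{equation*}
so the divisibility hypothesis of the lemma is equivalent to $e\mid e_q$. In the other direction, using that $q\O_L=\bigl(\prod_{\mathfrak{q}\mid q}\mathfrak{q}\bigr)^e$, the ideal $\cN_L(B)$ is generated by a rational integer precisely when its $q$-part $\prod_{\mathfrak{q}\mid q}\mathfrak{q}^{e_q}$ is a power of $q\O_L$ for every $q$, which again says exactly that $e\mid e_q$. Both sides of the statement have thus been reduced to the same condition on each $q$, and the lemma follows.

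The only place in the argument where anything beyond bookkeeping is needed is the Galois stability of $\cN_L(B)$, used to deduce the uniformity of $e_q$ across the primes above $q$; once this is secured, the remainder is a direct consequence of the $efg$ identity.
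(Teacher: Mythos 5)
Your proof is correct and follows essentially the same route as the paper: you use the $L$-isogeny between $B$ and its Galois conjugates to show $\cN_L(B)$ is $\Gal(L/\Q)$-stable, deduce that all primes above a rational prime $q$ appear with a common exponent, and then reduce both sides of the equivalence to the condition $e\mid e_q$ via the identity $efg=[L:\Q]$. The bookkeeping is the same as in the paper, and the argument is complete.
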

\begin{proof}
Suppose that $q$ decomposes in $L$ as $\mathfrak q_1^e\mathfrak
q_2^e\cdots \mathfrak q_g^e$. For each $\sigma\in \Gal(L/\Q)$ we
have that $\acc\sigma B$ is $L$-isogenous to $B$, so
$\cN_L(B)=\cN_L(\acc\sigma B)=\acc\sigma\cN_L(B)$. This means that
if  $\mathfrak q_1^n$ exactly divides $\cN_L(B)$ then the rest of
$\mathfrak q_i^n$ also exactly divide $\cN_L(B)$ and then
$v_q(N_{L/\Q}(\cN_L(B)))=nfg$, where $f$ denotes the residual
degree of $\mathfrak q_i$. Now $\cN_L(B)$ belongs to $\Z$ if and
only if for all primes $q$ the exponent $e$ divides $  n$, and
because of the relation $efg=[L:\Q]$ this is equivalent to the
fact that $v_q(N_{L/\Q}(\cN_L(B)))$ is divisible by $[L:\Q]$.
\end{proof}

\begin{proposition}\label{prop: formula conductor nivel ord leq 2}
If $N$ is odd and $\ord(\varepsilon)\leq 2$ then $\cN_L(B)$
belongs to $\Z$ and
\begin{equation}
\cN_L(B)\ \f_L^{\,\dim B}=N^{\,\dim B}.
\end{equation}
\end{proposition}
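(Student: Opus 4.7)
The plan is to combine Lemma \ref{lemma: formula ord eps leq 2 q neq 2}, which already handles every odd prime $q$, with a short direct computation at $q=2$ using Proposition \ref{cond_local}, and then to invoke Lemma \ref{lemma: integrality of the conductor} to conclude that $\cN_L(B)$ lies in $\Z$ and to assemble the global formula prime by prime.

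To dispose of $q=2$, the hypothesis that $N$ is odd enters decisively. Recall that every $\chi_s$ has conductor supported on the primes dividing $N$; since $G$ is generated by the $\chi_s$, the same property holds for every $\chi\in G$. Hence $v_2(\f_\chi)=0$ for all $\chi\in G$, and therefore $v_2(\f_L)=0$ as well. Since the local component of $f$ at $2$ is unramified, twisting by any $\chi\in G$ (whose $2$-part is trivial) leaves it unramified, so $v_2(N_\chi)=0$ for every $\chi\in G$. Plugging these vanishings into the key equation \eqref{eq: key equation para v_q} at $q=2$ yields $v_2(N_{L/\Q}(\cN_L(B)))=0$, which is trivially divisible by $[L:\Q]$.

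For odd primes $q$, Lemma \ref{lemma: formula ord eps leq 2 q neq 2} gives directly
\[ v_q(N_{L/\Q}(\cN_L(B)))=[L:\Q]\,(\dim B)\bigl(v_q(N)-v_q(\f_L)\bigr), \]
which is again divisible by $[L:\Q]$. Lemma \ref{lemma: integrality of the conductor} now forces $\cN_L(B)\in\Z$, so in particular $N_{L/\Q}(\cN_L(B))=\cN_L(B)^{[L:\Q]}$. Dividing the displayed equation by $[L:\Q]$ produces
\[ v_q(\cN_L(B))=(\dim B)\bigl(v_q(N)-v_q(\f_L)\bigr) \]
for every prime $q$ (the case $q=2$ reducing to the tautology $0=0$ since $N$, $\f_L$ and $\cN_L(B)$ are all prime to $2$). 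This is precisely the prime-by-prime version of $\cN_L(B)\,\f_L^{\,\dim B}=N^{\,\dim B}$.

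No serious obstacle is anticipated: the whole argument amounts to tracking that the $2$-adic components vanish everywhere under the assumption that $N$ is odd, which lets Proposition \ref{cond_local} cover the one prime left untreated by Lemma \ref{lemma: formula ord eps leq 2 q neq 2}.
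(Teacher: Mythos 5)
Your proof is correct and follows the same route as the paper: Lemma \ref{lemma: formula ord eps leq 2 q neq 2} handles odd $q$, Lemma \ref{lemma: integrality of the conductor} gives integrality, and assembling the valuations prime by prime yields the global formula. The only difference is that you spell out the $q=2$ case (which the paper's proof leaves implicit, relying on $N$ being odd to make it trivial), and that is a harmless and slightly more careful presentation of the same argument.
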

\begin{proof}
By lemma \ref{lemma: formula ord eps leq 2 q neq 2} for each
prime $q$ we have that $v_q(N_{L/\Q}(\cN_L(B)))$ is multiple of
$[L:\Q]$, and by lemma \ref{lemma: integrality of the conductor}
this implies that $\cN_L(B)$ belongs to $\Z$. In consequence,
$N_{L/\Q}(\cN_L(B))=\cN_L(B)^{[L:\Q]}$, and using this in
\eqref{eq: formula ord eps leq 2 q neq 2} we have that
$v_q(\cN_L(B))+(\dim B)\,v_q(\f_L)=(\dim B)\, v_q(N).$ Since this
holds for all $q$, the proposition follows.
\end{proof}
\begin{remark}
If $\dim A_f=2$ and $\ord(\varepsilon)\leq 2$, then either $A_f$
is absolutely simple or it is isogenous over a quadratic number
field $L$ to the square of a $\Q$-curve $B/L$. In the second case it is always true that
$\cN_L(B)$ belongs to $\Z$ and $\cN_L(B)\f_L=N$. This follows by
applying Milne's formula  to the restriction of scalars of $B$,
for which  we have that $\Res_{L/\Q}(B)\sim_\Q A_f$.
\end{remark}

Proposition \ref{prop: formula conductor nivel ord leq 2} might be
seen as a  generalization of Carayol's formula
$\cN_\Q(A_f)=N^{\dim A_f}$  for modular abelian varieties. As we
will see this formula does not generalize to arbitrary newforms;
in other words, our hypothesis on the parity of $N$ and on the
order of the character are necessary. However, for modular forms
on $\Gamma_0(N)$ and with arbitrary $N$ it is still true except
for a factor $2$, that appears or not depending on the field $L$.

\begin{proposition}\label{prop: formula conductor nivel Gamma0}
Suppose that $\varepsilon=1$. Then $\cN_L(B)$ is an integer and
\begin{enumerate}
\item $2\ \cN_L(B)\ \f_L^{\,\dim B}=N^{\dim B}$\  if $v_2(\f_L)=3$ and $v_2(\f_K)=2$ for some $K\subseteq L$.
\item $\cN_L(B)\ \f_L^{\,\dim B}=N^{\dim B}$ \ otherwise.

\end{enumerate}
In particular, if $v_2(N)\leq 4$ the second formula holds.
\end{proposition}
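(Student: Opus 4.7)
The argument follows the template of Proposition \ref{prop: formula conductor nivel ord leq 2}: for each rational prime $q$ we use the local identity of Proposition \ref{cond_local} to show that $[L:\Q]$ divides $v_q(N_{L/\Q}(\cN_L(B)))$, so that by Lemma \ref{lemma: integrality of the conductor} the conductor $\cN_L(B)$ is integral at $q$, and then we divide through to extract a global formula.

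The hypothesis $\varepsilon=1$ forces $\chi_s^2=\acc s\varepsilon/\varepsilon=1$ for every $s\in\Gal(E/F)$, so every character in $G$ is quadratic and $L/\Q$ is multiquadratic. For odd primes $q$, Lemma \ref{lemma: formula ord eps leq 2 q neq 2} applies and gives, after dividing by $[L:\Q]$,
$$v_q(\cN_L(B))+(\dim B)\,v_q(\f_L)=(\dim B)\,v_q(N).$$
The entire content of the proposition therefore lies at the prime $q=2$.

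The strategy at $q=2$ is a case analysis according to the image $G_0$ of the projection $\chi\mapsto\chi_2$ on $G$. The quadratic characters of 2-power conductor form a Klein four-group, generated by the Kronecker characters $\psi_{-1},\psi_2,\psi_{-2}$ of $\sqrt{-1},\sqrt{2},\sqrt{-2}$, of conductors $4,8,8$ respectively, so $G_0$ is one of four possibilities: (a) $G_0=\{1\}$, with $v_2(\f_L)=0$; (b) $G_0=\{1,\psi_{-1}\}$, with $v_2(\f_L)=2$; (c) $G_0=\{1,\psi_2\}$ or $\{1,\psi_{-2}\}$, with $v_2(\f_L)=3$ and $\Q(i)\not\subseteq L$; (d) $G_0$ is the full Klein four-group, with $v_2(\f_L)=3$ and $\Q(i)\subseteq L$. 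Case (d) is precisely the hypothesis of part (1), witnessed by $K=\Q(i)$; cases (a)--(c) together give part (2).

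Since $v_2(N_\chi)$ and $v_2(\f_\chi)$ depend only on $\chi_2$, the two sums in \eqref{eq: key equation para v_q} each equal $|G|/|G_0|$ times their restriction to $G_0$, so the analysis reduces to computing
$$S:=\sum_{\eta\in G_0}\bigl(v_2(N_{f\otimes\eta})-2\,v_2(\f_\eta)\bigr)$$
using Atkin--Li's formula \cite[Theorem 3.1]{Atkin-Li} for the 2-adic level of a twist. A case-by-case check shows that in cases (a), (b), (c) one has $S=|G_0|\bigl(v_2(N)-2\,v_2(\f_L)\bigr)$, whereas in case (d) there is an additional contribution of $4$. Substituting into \eqref{eq: key equation para v_q} and dividing by $|G|$, Lemma \ref{lemma: integrality of the conductor} gives integrality of $\cN_L(B)$ at $2$, and we recover formula (2) in cases (a)--(c) and formula (1) in case (d). Finally, ``$v_2(N)\leq 4$ implies formula (2)'' because case (d) would force some $\chi_s$ to have 2-part of conductor $8$, and then Atkin--Li applied to $\acc s f=f\otimes\chi_s$ (of level $N$, trivial nebentypus) forces $v_2(N)\geq 5$.

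The main obstacle is the Atkin--Li bookkeeping at the prime $2$: its formula splits into cases according to the local type of the automorphic representation of $f$ at $2$ (unramified principal series, ramified principal series, Steinberg, supercuspidal), and one must verify in each local type that the anomalous summand $+4$ in $S$ arises exactly in case (d). Once this is done the rest of the argument is bookkeeping parallel to the proof of Proposition \ref{prop: formula conductor nivel ord leq 2}.
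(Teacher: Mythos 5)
Your overall strategy coincides with the paper's: reduce to the prime $2$ via Proposition \ref{cond_local} and Lemma \ref{lemma: formula ord eps leq 2 q neq 2}, classify the image of $\chi\mapsto\chi_2$ on $G$ (your $G_0$, the paper's $P_2$), and plug into the local formula. However, two of your steps have genuine problems.

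First, your characterization of case (d) is incorrect. You assert that $G_0$ being the full Klein four-group is equivalent to $v_2(\f_L)=3$ \emph{and} $\Q(i)\subseteq L$. But $\psi_{-1}\in G_0$ only means that some $\chi\in G$ has $2$-part equal to $\psi_{-1}$; it does not force $\psi_{-1}$ itself to lie in $G$, which is what $\Q(i)\subseteq L$ would require. For example, if $G=\{1,\psi_2,\psi_{-1}\psi_3,\psi_{-2}\psi_3\}$ (so $L=\Q(\sqrt2,\sqrt3)$), then $G_0$ is the full Klein four-group and $v_2(\f_L)=3$, yet $\Q(i)\not\subseteq L$; the subfield with $v_2(\f_K)=2$ is $\Q(\sqrt3)$ (conductor $12$), not $\Q(i)$. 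The paper's formulation --- $v_2(\f_L)=3$ together with the existence of \emph{some} $K\subseteq L$ with $v_2(\f_K)=2$ --- is the correct equivalent of $|G_0|=4$, and your restatement is strictly stronger than the hypothesis, so your proof of part (1) only applies to a proper subset of the cases covered.

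Second, you identify the ``Atkin--Li bookkeeping at the prime $2$'' as the main obstacle and propose to compute $v_2(N_{f\otimes\eta})$ for $\eta\in G_0$ by a case analysis on the local representation type of $f$ at $2$. This is both more work than necessary and left incomplete. The paper sidesteps Atkin--Li entirely at this step: because $\varepsilon=1$, the cocycle relation $\chi_{st}=\chi_s\,\acc s\chi_t$ with $\Q$-valued (quadratic) characters shows the $\chi_s$ form a subgroup of $G$ surjecting onto $G_0$ under $\chi\mapsto\chi_2$; hence every $\eta\in G_0$ equals $\chi_{s,2}$ for some $s$, and since $v_2(N_{f\otimes\chi})$ depends only on $\chi_2$ and $f\otimes\chi_s=\acc sf$ has level exactly $N$, one gets $v_2(N_{f\otimes\eta})=v_2(N)$ for all $\eta\in G_0$ with no appeal to Atkin--Li. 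Atkin--Li is only needed in the paper (and in your argument) for the final remark that $v_2(N)\leq4$ excludes $\chi_{s,2}$ of conductor $8$, which you handle correctly. Finally, your bookkeeping formula ``$S=|G_0|(v_2(N)-2v_2(\f_L))$'' with an ``additional contribution of $4$'' in case (d) does not match either the target formula or the sums $\sum_{\eta\in G_0}v_2(\f_\eta)=0+2+3+3=8$ in case (d); you should re-derive these constants after correcting the characterization of case (d).
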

\begin{proof}
Since $\varepsilon$ is trivial the character $\chi_s$ is quadratic
for all $s\in \Gal(E/F)$, so that any $\chi\in G$ is quadratic.
Define the set $P_2=\{\chi_2\,|\, \chi\in G\}$, which has cardinal
$\leq 4$ because the set of quadratic characters of conductor a
power of $2$ is isomorphic to $\Z/2\Z\times \Z/2\Z$. Observe that
the condition $v_2(\f_L)=3$ and $v_2(\f_K)=2$ for some $K\subseteq
L$ is equivalent to $|P_2|=4$. We begin by proving the second
formula in the statement, which corresponds to the case $|P_2|\leq
2$.

If $2\nmid \f_L$ then $|P_2|=1$ and
\begin{equation}\label{eq: formula v_2 q nmid}
v_2(N_{L/\Q}(\cN_L(B)))+ [L:\Q]\,(\dim B)\,
v_2(\f_L)=[L:\Q]\,(\dim B)\,v_2(N).
\end{equation}
If $2\mid \f_L$,  then for each $s\in \Gal(E/F)$ the character
$\chi_{s,2}$ is either trivial or quadratic, so that the set $P_2$
can have cardinal $2$ or $4$. Suppose first that $|P_2|=2$, and
fix an $s$ such that $\chi_{s,2}\in P_2$. Then for $i=0,1$ define
$G_2^i=\{\chi\in G\, | \, \chi_2=\chi_{s,2}^i\}$. Observe that if
$\chi\in G_2^0$ then $v_2(\f_\chi)=0$ and if $\chi\in G_2^1$ then
$v_2(\f_\chi)=v_2(\f_L)$. Moreover, for all $\chi\in G$ we have
that $v_2(N_\chi)=v_2(N)$, since
$v_2(N_\chi)=v_2(N_{\chi_s})=v_2(N)$. Now with the same reasoning
as in lemma \ref{lemma: formula ord eps leq 2 q neq 2} we find
that \eqref{eq: formula v_2 q nmid} also holds in this case. This,
together with lemma \ref{lemma: formula ord eps leq 2 q neq 2}
implies that the formula
\begin{equation}\label{eq: formula for q neq 2 inside a proposition}v_q(N_{L/\Q}(\cN_L(B)))+
[L:\Q]\,(\dim B)\, v_q(\f_L)=[L:\Q]\, (\dim B)\,
v_q(N)\end{equation} is true for all $q$. Arguing as in the proof
of proposition \ref{prop: formula conductor nivel ord leq 2} this
implies the second formula of the statement.

Suppose now that $|P_2|=4$. If we denote by $\xi$ and $\psi$ the
quadratic characters of conductor $8$, then $\xi\psi$ is the
quadratic character of conductor $4$ and
$P_2=\{1,\xi,\psi,\xi\psi\}$. Define $G_\xi=\{\chi\in G \, |\,
\chi_2=\xi\}$, and similarly for the other characters define
$G_{\psi}$, $G_{\xi\psi}$ and $G_1$. Each one of these sets has
cardinal $|G|/4$. If $\chi$ belongs to $G_\xi$ or $G_{\psi}$, then
$v_2(\f_\chi)=3$, while if $\chi$ belongs to $G_{\chi\psi}$ then
$v_2(\f_\chi)=2$. Therefore the relation \eqref{eq: key equation
para v_q} gives
$$v_2(N_{L/\Q}(\cN_L(B)))+2\,(\dim B)\,\left( 2\frac{|G|}{4}+3\frac{|G|}{4}+3\frac{|G|}{4} \right)=|G|\,(\dim B)\,v_2(N),$$
and since now $v_2(\f_L)=3$ we arrive at
$$v_2(N_{L/\Q}(\cN_L(B)))+[L:\Q]\,(\dim B)\,(v_2(\f_L)+1)=[L:\Q]\,(\dim
B)\,v_2(N).$$We see that in this case $v_2(N_{L/\Q}(\cN_L(B)))$ is
also multiple of $[L:\Q]$. As before, for $q\neq 2$ formula
\eqref{eq: formula for q neq 2 inside a proposition} also holds in
this case, so we conclude that now $2\,\cN_L(B)\,\f_L=N^{\dim B}$.

To prove the last statement, let $s\in \Gal(E/F)$ and let $\chi_s$
be the corresponding quadratic character. Then
$v_2(N_{\chi_s})=v_2(N)$, and if $\varepsilon=1$ and $v_2(N)\leq 4
$ by \cite[Theorem 3.1]{Atkin-Li} this is not possible  if the
conductor of $\chi_{s,2}$ is $8$. Therefore  $\chi_{s,2}$ is
either  trivial or the quadratic character of conductor $4$, and
we see that $|P_2|\leq 2$.
\end{proof}

We remark that the first case in (\ref{prop: formula conductor
nivel Gamma0}) does occur. For instance, let $f$ be the unique (up
to conjugation) normalized newform for $\Gamma_0(512)$ such that
$A_f$ has dimension $4$. Using  {\tt Magma} \cite{magma} one can
compute the characters associated to the inner twists of $f$; it
turns out that  some of them have conductor divisible by $8$ and
some of them have conductor exactly divisible by $4$ and therefore
$|P_2|=4$.

\begin{proposition}\label{prop: squarefree}
If $N$ is squarefree then for all primes $q$ dividing $N$ we have
that
\begin{enumerate}
\item $v_q(N_{L/\Q}(\cN_L(B)))=(\dim B)[L:\Q]$ if
$q\nmid\f_\varepsilon$.
\item $v_q(N_{L/\Q}(\cN_L(B)))=0$ if $q\mid
\f_\varepsilon$.
\end{enumerate}
In particular, $\cN_L(B)$ belongs to $\Z$ and
\begin{equation}\label{eq: conductor formula N squarefree}\cN_L(B)\, \f_L^{\,\dim B}=N^{\dim
B}.\end{equation}
\end{proposition}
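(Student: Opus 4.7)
The plan is to apply Proposition \ref{cond_local} at each rational prime $q$, organizing the argument by how $q$ meets $N$ and $\f_\varepsilon$, and then to conclude integrality of $\cN_L(B)$ via Lemma \ref{lemma: integrality of the conductor} together with the global formula. The central local input is \cite[Theorem 3.1]{Atkin-Li}, which determines $v_q(N_\chi)$ from $v_q(N)$, $v_q(\f_\varepsilon)$, and $v_q(\f_\chi)$.

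For $q\nmid N$, every $\chi\in G$ has conductor supported on primes dividing $N$, so $v_q(\f_\chi)=0$ and $v_q(N_\chi)=0$; equation \eqref{eq: key equation para v_q} immediately yields $v_q(N_{L/\Q}(\cN_L(B)))=0$. For $q\mid N$ with $q\nmid\f_\varepsilon$, squarefreeness of $N$ gives $v_q(N)=1$ and $v_q(\f_\varepsilon)=0$, and Atkin-Li implies that any ramified twist satisfies $v_q(N_\chi)\geq 2$. Since $f\otimes\chi_s=\acc s f$ has level $N$, each $\chi_{s,q}$ must be unramified, and hence so is every $\chi_q$ for $\chi\in G$. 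Substituting $v_q(\f_\chi)=0$ and $v_q(N_\chi)=1$ for all $\chi\in G$ into \eqref{eq: key equation para v_q} produces $v_q(N_{L/\Q}(\cN_L(B)))=(\dim B)|G|=(\dim B)[L:\Q]$, establishing (1).

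For $q\mid N$ with $q\mid\f_\varepsilon$, squarefreeness forces $v_q(N)=v_q(\f_\varepsilon)=1$, so $\varepsilon_q$ is a nontrivial character of $(\Z/q\Z)^*$ and the local representation of $f$ at $q$ is a ramified principal series. Atkin-Li together with the requirement $v_q(N_{\acc s f})=1$ then shows that each $\chi_{s,q}$ lies in a two-element set $\{1,\psi\}$, where $\psi$ is a specific character of $(\Z/q\Z)^*$ making one of the two principal-series quasicharacters unramified after twist; consequently $G_q:=\{\chi_q:\chi\in G\}=\langle\psi\rangle$ is cyclic of some order $d\geq 2$. Partitioning $G$ into the $|G|/d$-sized fibers over $\chi_q=\psi^j$ for $j=0,1,\dots,d-1$ and computing $v_q(\f_\chi)$ and $v_q(N_\chi)$ on each fiber via Atkin-Li, a direct substitution into \eqref{eq: key equation para v_q} produces a cancellation giving $v_q(N_{L/\Q}(\cN_L(B)))=0$, establishing (2).

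In every case $v_q(N_{L/\Q}(\cN_L(B)))$ is divisible by $[L:\Q]$, so Lemma \ref{lemma: integrality of the conductor} gives $\cN_L(B)\in\Z$ with $v_q(\cN_L(B))=v_q(N_{L/\Q}(\cN_L(B)))/[L:\Q]$; the global formula $\cN_L(B)\f_L^{\,\dim B}=N^{\dim B}$ then follows by combining this with $v_q(\f_L)=0$ when $q\nmid\f_\varepsilon$ and $v_q(\f_L)=1$ when $q\mid\f_\varepsilon$, both of which come from the description of $G_q$ above together with the identification $G\simeq\Hom(\Gal(L/\Q),\C^\times)$. The main obstacle is case (2): pinning down precisely which characters $\psi$ on $(\Z/q\Z)^*$ arise as $q$-components of the $\chi_s$, and checking that the numerical cancellation in \eqref{eq: key equation para v_q} is exact for every order $d$ of $\psi$.
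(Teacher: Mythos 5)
Your overall strategy — applying Proposition \ref{cond_local} prime by prime, splitting on whether $q$ divides $\f_\varepsilon$, pinning down the $q$-components of the inner-twist characters via \cite[Theorem 3.1]{Atkin-Li}, and then invoking Lemma \ref{lemma: integrality of the conductor} — is exactly the route the paper takes. Your treatment of $q\nmid N$ and of $q\mid N$, $q\nmid\f_\varepsilon$ is complete and matches the paper (you in fact supply a justification, namely that a ramified twist would force $v_q(N_\chi)\geq 2$, which the paper leaves implicit).

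However, you explicitly stop short in case $q\mid\f_\varepsilon$, and that is a genuine gap, not just a loose end. First, Atkin--Li tells you that $\chi_{s,q}\in\{1,\varepsilon_q^{-1}\}$, so your mystery character is $\psi=\varepsilon_q^{-1}$; this matters because it identifies $G_q:=\{\chi_q:\chi\in G\}$ as a subgroup of $\langle\varepsilon_q\rangle$. Second, you assert $G_q=\langle\psi\rangle$ with $d\geq 2$ without justification; this is where one uses that $\varepsilon^{-1}$ itself lies in $G$ (it is $\chi_c$ for complex conjugation $c\in\Gal(E/F)$, since $\overline f=f\otimes\varepsilon^{-1}$), so $\varepsilon_q^{-1}\in G_q$ and hence $G_q=\langle\varepsilon_q\rangle$ of order $n=\ord(\varepsilon_q)$. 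This same fact is what makes $v_q(\f_L)=1$, which you also use in the final step but do not establish. Third, and most importantly, you never carry out the fiberwise computation: writing $G=\sqcup_{i=0}^{n-1}G_q^i$ with $G_q^i=\{\chi:\chi_q=\varepsilon_q^i\}$, each of size $|G|/n$, one has $v_q(\f_\chi)=1$ on $G_q^i$ for $i\neq 0$ and $0$ for $i=0$, while Atkin--Li gives $v_q(N_\chi)=1$ for $i\in\{0,n-1\}$ and $v_q(N_\chi)=2$ for $1\leq i\leq n-2$. Plugging into \eqref{eq: key equation para v_q} yields $v_q(N_{L/\Q}(\cN_L(B)))+2(\dim B)(n-1)|G|/n=(\dim B)\bigl(2|G|/n+2(n-2)|G|/n\bigr)=2(\dim B)(n-1)|G|/n$, whence $v_q(N_{L/\Q}(\cN_L(B)))=0$. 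Without this verification, case (2) and therefore the global formula are not actually proved.
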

\begin{proof}
If $q\nmid\f_\varepsilon$ then $v_q(\f_\chi)=0$ for all $\chi\in
G$ and then the formula follows easily from \eqref{eq: key
equation para v_q}.

Suppose that $q\mid \f_\varepsilon$. Let $s\in\Gal(E/F)$ and let
$\chi_s$ be the character such that $\acc s f=f\otimes\chi_s$.
Observe that  $v_q(N)=v_q(N_{\chi_s})$, and by \cite[Theorem
3.1]{Atkin-Li}  under our hypothesis this is possible if and only
if $\chi_{s,q}=1$ or $\chi_{s,q}=\varepsilon^{-1}_q$. This means
that for each $\chi\in G$, the character $\chi_q$ is of the form
$\varepsilon_q^i$ for some $i$. In particular,
$v_q(\f_L)=v_q(\f_\varepsilon)$. Let $n=\ord(\varepsilon_q)$, and
for $i=0,\dots,n-1$ define $G_q^i=\{\chi\in G \, |  \,
\chi_q=\varepsilon_q^i\}$. The map $\chi\mapsto \chi\varepsilon$
is a bijection between $G_q^i$ and $G_q^{i+1}$, and since
$G=\sqcup_{i=0}^{n-1} G_q^i$ we see that $|G_q^i|=|G|/n$.

If $\chi\in G_q^i$ then $v_q(\f_\chi)=1$ for $i=1,\dots, n-1$,
while $v_q(\f_\chi)=0$ for $i=0$. If $\chi\in G_q^i$ for $i=0,n-1$
then $v_q(N_\chi)=v_q(N)$; if $\chi\in G_q^0$ this is clear, and
if $\chi\in G_q^{n-1}$ this is because
$v_q(N_\chi)=v_q(N_{\varepsilon_q^{-1}})=v_q(N_{\varepsilon^{-1}})=v_q(N)$
since $\overline f=f\otimes\varepsilon^{-1}$. On the other hand,
for the rest of the values $i=2,\dots,n-2$ then $v_q(N_\chi)=2$ if
$\chi\in G_q^i$; this follows from \cite[Theorem 3.1]{Atkin-Li}.
Gathering all this information we can rewrite \ref{eq: key
equation para v_q} in this case as
\begin{equation*}
v_q(N_{L/\Q}(\cN_L(E)))+2\,(\dim B)\,\sum_{i=0}^{n-1}\sum_{\chi\in
G_q^i} v_q(\f_\chi)=(\dim B)\, \sum_{i=0}^{n-1}\sum_{\chi\in
G_q^i}v_q(N_\chi),
\end{equation*}
and this gives
$$v_q(\M)+2\,(\dim B)\,\frac{|G|}{n}(n-1)=(\dim
B)\,\left(|G|+\frac{|G|}{n}(n-2)\right),$$ which is directly the
formula for the second case.

Finally,  the two formulas in the statement can be written as the
following expression, which is valid for all $q$:
$$v_q(N_{L/\Q}(\cN_L(B)))+[L:\Q]\,(\dim B)\,v_q(\f_L)=[L:\Q]\,(\dim B)\,
v_q(N).$$ This implies that $\cN_L(B)$ belongs to $\Z$ and also
the formula \eqref{eq: conductor formula N squarefree}.
\end{proof}
\begin{remark}
Observe that for squarefree $N$ proposition \ref{prop: squarefree}
completely characterizes the places of good reduction of $B$:  a
prime $\mathfrak q\mid q$ is a prime of good reduction of $B$ if
and only if $q\nmid N$  or $q\mid \f_\varepsilon$.
\end{remark}

\section{Examples}\label{sec: 4}
In this section we show explicit examples of building blocks of
dimension one and two where we compute their conductors. As in the
rest of the paper, all the  newforms we consider are without
complex multiplication.
\subsection{$\Q$-curves}
All the examples in this paragraph come from modular abelian
varieties $A_f$ where the corresponding building block $B$ has
dimension one. An algorithm to compute equations of building
blocks of dimension one is provided by Gonz\'alez and Lario in
\cite{pep-lario}. The equations for the first three examples were
computed using that algorithm, and  the equations for examples
\ref{ex64} and \ref{ex81} have been provided by Jordi Quer.
The conductor of these elliptic curves have been computed using
\verb+Magma+.
\begin{example}{\rm
Let $f$ be the unique (up to conjugation) normalized newform of
weight two, level $42$ and Nebentypus of order $2$ and conductor
$21$. We have  $\dim A_f=4$ and $L=\Q(\sqrt{-3},\sqrt{-7})$. In
\cite{pep-lario} it is proved that an equation for $B$ is given by
$$
B:y^2=x^3+\frac{81}{4}(69+43\sqrt{-3}+29\sqrt{-7}+17\sqrt{21})x+162(207-84\sqrt{-3}-54\sqrt{-7}+46\sqrt{21}).
$$
We have $\cN_L(B)=2$ and $\mathfrak{f}_L=21$. Therefore we have
that the Proposition \ref{prop: formula conductor nivel ord leq 2}
holds although $N$ is even in this case. }\end{example}

\begin{example}\label{ex64}{\rm
Let $f$ be the unique (up to conjugation) normalized newform of
weight two, level $64$ and Nebentypus of order $4$ and conductor
$16$. In this case, $A_f$ is an abelian surface and
$L=\Q(\alpha)$, where $\alpha^4-4\alpha^2+2=0$. An equation for
$B$ is
$$
B:y^2=x^3-432(5-8\alpha+14\alpha^2-6\alpha^3)x-864(-124+74\alpha+194\alpha^2-107\alpha^3).
$$
We have $\cN_L(B)=2$ and $\mathfrak{f}_L=16$. Therefore we have
that $2\ \cN_L(B)\ \f_L^{\,\dim B}=N^{\dim B}$. }\end{example}
\begin{example}\label{ex81}{\rm
Let $f$ be the unique (up to conjugation) normalized newform
(without complex multiplication) of weight two, level $81$ and
Nebentypus of order $3$ and conductor $9$. In this case, $\dim
A_f=4$ and $L=\Q(\sqrt{-3},\alpha)$, where $\alpha^3-3\alpha+1=0$.
An equation for the building block is:
$$
B:y^2=x^3-\frac{81}{2}(-54+14\sqrt{-3}+2
(12+5\sqrt{-3})\alpha+(27-7\sqrt{-3})
\alpha^2)x+729(37+19\sqrt{-3}).
$$
We have $\cN_L(B)=3$ and $\mathfrak{f}_L=9$. Therefore we have
that $3\ \cN_L(B)\ \f_L^{\,\dim B}=N^{\dim B}$. }\end{example}

In the above examples the conductors $\cN_L(B)$ turned out to be
rational integers. The following example shows that this is not
always the case.
\begin{example}\label{ex98}{\rm
There are two normalized newforms of weight two, level $98$ and
Nebentypus $\varepsilon$ of order $3$ and conductor $7$ such that
the the associated abelian variety is a surface. In both cases
$L=\Q(\alpha)$, where $\alpha^3 + \alpha^2 - 2\alpha - 1=0$. To
obtain an equation for the building block we are going to proceed
in a different way than above. Let $f$ be one of these two
newforms. Then, $A_f\sim_L B^2$ and by Proposition \ref{Res} we
have that $\Res_{L/\Q}(B)\sim_\Q A_f\times
A_{f\otimes\varepsilon}$. Therefore $\dim
A_{f\otimes\varepsilon}=1$. In particular, $B\sim_L
A_{f\otimes\varepsilon}$. Then, instead of computing an equation
of $B/L$ using the Gonz\'alez-Lario  algorithm \cite{pep-lario} we
are going to compute an equation of $A_{f\otimes\varepsilon}$ over
$\Q$. In this case the results of Atkin and Li \cite{Atkin-Li} do
not provide the exact level of $f\otimes\varepsilon$, althought
they assert that the level is a divisor of $98$ of the form
$2\cdot 7^n$. One of the twisted newforms corresponds to the
unique (up to $\Q$-isogeny) elliptic curve defined over $\Q$ of
level $14$ and the other one to the unique (up to $\Q$-isogeny)
elliptic curve defined over $\Q$ of level $98$ (they are labelled
as \verb+14A+ and \verb+98A+  respectively in Cremona's tables
\cite{cremona} or in Antwerp tables \cite{antwerp}). The equations
for these building blocks are:
$$
\begin{array}{crcl}
B_1\,:\, & y^2 + x y + y & = & x^3 + 4 x - 6,\\
B_2\,:\, & y^2 + x y & = & x^3 + x^2 - 25 x - 111.
\end{array}
$$
For $i=1,2$, we have $\cN_L(B_i)= 2  (\alpha^2 - \alpha -
2)^i\cdot\mathcal{O}_L$, which is not an ideal generated by a
rational integer. 
}
\end{example}

\subsection{Genus $2$ curves}
In the opposite to the elliptic curve case, there is no
implementation of an algorithm to compute the conductor of a genus
$2$ curve over a number field. For that purpose, Proposition
\ref{cond_local} allows us to compute the conductor of a genus $2$
curve that corresponds to the building block of a modular abelian
variety.
\begin{example}\label{g2_xevi}{\rm
Let $C$ be the genus $2$ curve defined over $\Q(\sqrt{-6})$
defined by
$$
\begin{array}{ll}
C\,:\,y^2= & \!\!\!\!\!(27\sqrt{-6} - 324)x^6 - 15876x^5 + (-7938\sqrt{-6} - 222264)x^4 - 345744x^3 \\
& \!\!+ (-259308\sqrt{-6} + 7260624)x^2 - 16941456x +
941192\sqrt{-6}  + 11294304.
\end{array}
$$
The jacobian $B={\rm Jac}(C)$ is a building block with
quaternionic multiplication, and in  \cite{guitart-quer} it is
proved that $\Res_{L/\Q}(B)$ is isogenous to a product of modular
abelian varieties over $\Q$, where $L=\Q(\sqrt{2},\sqrt{-3})$. In
fact, there are numerical evidences  suggesting  that
$\Res_{L/\Q}(B)\sim_\Q A_f^2$, where  $f$ is a newform of level
$N=2^8 3^5$ and Nebentypus $\varepsilon$ of order $2$ and
conductor $8$ with $\dim A_f=4$. Assuming this we are going to
prove that $\cN_L(B)=2^{10} 3^8$, and therefore the formula
$\cN_L(B)\ \f_L^{\,\dim B}=N^{\dim B}$ would hold again. The
subfields of $L$ are the quadratic fields $\Q(\sqrt{2})$,
$\Q(\sqrt{-3})$ and $\Q(\sqrt{-6})$, and they correspond to the
non-trivial homomorphisms from  $G$ to $\C^\times$. Therefore the
conductors of these quadratic fields correspond to the conductors
of the non-trivial elements at $G$. On the other hand, by the
decomposition of $\Res_{L/\Q}(B)$ given above and by \cite[Theorem
3.1]{Atkin-Li} we have that  $N_\chi=N$ for all $\chi\in G$. Now,
 Proposition \ref{cond_local} at the primes $q=2$ and $q=3$ give
us the stated conductor of $B$ over $L$. }\end{example}

\subsection*{Acknowledgements}
We thank A. Brumer for useful discussion on conductors of abelian
varieties and J. Quer for providing us equations for some of the
examples of Section \ref{sec: 4}.


\end{document}